\newtheorem{theorem}{Theorem}[section]
\newtheorem{corollary}[theorem]{Corollary}
\newtheorem{definition}[theorem]{Definition}
\numberwithin{equation}{section}
\newenvironment{proof}{\par\noindent{\bf Proof.}}{$\square$\par\bigskip}
\begin{document}

\title{\textbf{A note on the bi-periodic Fibonacci and Lucas matrix sequences}}
\author{\texttt{Arzu Coskun, Nazmiye Yilmaz} and \texttt{Necati Taskara}%
\thanks{%
e mail: \ \textit{arzucoskun58@gmail.com, nzyilmaz@selcuk.edu.tr,
ntaskara@selcuk.edu.tr }}}
\date{Department of Mathematics, Faculty of Science,\\
Selcuk University, Campus, 42075, Konya - Turkey }
\maketitle

\begin{abstract}
In this paper, we introduce the bi-periodic Lucas \textit{matrix sequence}
and present some fundamental properties of this generalized matrix sequence.
Moreover, we investigate the important relationships between the bi-periodic
Fibonacci and Lucas matrix sequences. We express that some behaviours of
bi-periodic Lucas numbers also can be obtained by considering properties of
this new matrix sequence. Finally, we say that the matrix sequences as Lucas, $k$-Lucas and Pell-Lucas are special cases of this
generalized matrix sequence.

{\textit{Keywords:}} bi-periodic Fibonacci matrix sequence, bi-periodic
Lucas matrix sequence, bi-periodic Lucas number, generalized Fibonacci matrix sequence, generating function.

{\textit{AMS Classification:}} 11B39, 15A24.
\end{abstract}

\section{Introduction and Preliminaries}
\qquad 
There are so many studies in the literature that concern about the special
number sequences such as Fibonacci, Lucas, Pell, Jacobsthal, Padovan and
Perrin (see, for example \cite{Bilgici, EdsonYayenie, Falcon, Horadam, Koshy, Ocalark, YazlikTaskara,Yilmazark}, and the references cited therein). Especially, the Fibonacci and Lucas numbers have attracted the attention of mathematicians
because of their intrinsic theory and applications.

Many authors have generalized Fibonacci and Lucas sequences in different
ways. For example, in \cite{Bilgici, EdsonYayenie}, the authors defined the bi-periodic
Fibonacci $\left\{ q_{n}\right\} _{n\in \mathbb{N}}$ sequence as
\begin{equation}
\text{ }q_{n}=\left\{ 
\begin{array}{c}
aq_{n-1}+q_{n-2},\ \ \text{if }n\text{ is even} \\ 
bq_{n-1}+q_{n-2},\ \ \text{if }n\text{ is odd}%
\end{array}
\right. ,  \label{1.0}
\end{equation}%
and the bi-periodic Lucas $\left\{ l_{n}\right\} _{n\in \mathbb{N}}$
sequence as in the form
\begin{equation}
l_{n}=\left\{ 
\begin{array}{c}
al_{n-1}+l_{n-2},\ \ \text{if }n\text{ is odd} \\ 
bl_{n-1}+l_{n-2},\ \ \text{if }n\text{ is even}
\end{array}%
\right. ,  \label{1.1}
\end{equation}
where $q_{0}=0,\ q_{1}=1,$ $l_{0}=2$, $l_{1}=a$ and $a, b$ are nonzero real numbers. Also, in \cite{Bilgici}, Bilgici gave some
relations between bi-periodic Fibonacci and Lucas numbers as in the
following:
\begin{equation}\label{iliski}
l_{n}=q_{n-1}+q_{n+1},
\end{equation}
\begin{equation}\label{1.2}
\left( ab+4\right) q_{n}=l_{n+1}+l_{n-1}.  
\end{equation}

On the other hand, the matrix sequences have taken so much interest for
different type of numbers (\cite{CivcivTurkmen, CivcivTurkmen1, CoskunTaskara, GulecTaskara, Ipekark, UsluUygun, Yazlikark, YilmazTaskara}). In \cite{CoskunTaskara}, the authors
defined bi-periodic Fibonacci matrix sequence and obtained $n$th general
term of this matrix sequence as
\begin{equation} \label{1.3}
\mathcal{F}_{n}\left( a,b\right) =\left( 
\begin{array}{cc}
\left( \frac{b}{a}\right) ^{\varepsilon (n)}q_{n+1} & \frac{b}{a}q_{n} \\ 
q_{n} & \left( \frac{b}{a}\right) ^{\varepsilon (n)}q_{n-1}
\end{array}%
\right) \,, 
\end{equation}
where
\begin{equation}\label{epsilon}
\varepsilon (n)=\left\{ 
\begin{array}{c}
1,\ n\text{ odd} \\ 
0,\ n\text{ even}
\end{array}
\right.
\end{equation}
In addition, the authors found the Binet formula of the bi-periodic Fibonacci
matrix sequence as in the following
\begin{equation}\label{fibobinet}
\mathcal{F}_{n}\left( a,b\right) =A_{1}\left( \alpha ^{n}-\beta ^{n}\right)
+B_{1}\left( \alpha ^{2\left\lfloor \frac{n}{2}\right\rfloor +2}-\beta
^{2\left\lfloor \frac{n}{2}\right\rfloor +2}\right) ,
\end{equation}
where $A_{1}=\dfrac{\left[ {\mathcal{F}}_{1}\left( a,b\right) -b\mathcal{F}%
_{0}\left( a,b\right) \right] ^{\varepsilon (n)}\left[ a{\mathcal{F}}%
_{1}\left( a,b\right) -\mathcal{F}_{0}\left( a,b\right) -ab\mathcal{F}%
_{0}\left( a,b\right) \right] ^{1-\varepsilon (n)}}{\left( ab\right)
^{\left\lfloor \frac{n}{2}\right\rfloor }\left( \alpha -\beta \right) }$,
$B_{1}=\dfrac{b^{\varepsilon (n)}\mathcal{F}_{0}\left( a,b\right) }{\left(
ab\right) ^{\left\lfloor \frac{n}{2}\right\rfloor +1}\left( \alpha -\beta
\right) }$.\\
\par
In the light of all these above material, the main goal of this paper is to investigate the relationships between the bi-periodic Fibonacci and bi-periodic Lucas matrix sequences. To do that, firstly, we define the bi-periodic Lucas matrix sequences, because it is
worth to study a new matrix sequence related to less known numbers. Then, it
will be given the generating function, Binet formula and summation formulas
for this new matrix sequence. By using the results in Sections 2, we have a
great opportunity to obtain some new properties over this matrix sequence.

\section{The matrix sequence of bi-periodic Lucas numbers}
\qquad 
In this section, we mainly focus on the matrix sequence of the bi-periodic Lucas numbers. In fact, we present the some properties and Binet
formula of this matrix sequence. Also, we investigate various summations of this matrix sequence.

Now, we firstly define the bi-periodic Lucas matrix sequence as in the
following.
\begin{definition}
\label{tan} For $n\in \mathbb{N}$, the bi-periodic Lucas matrix sequence $\left( \mathcal{L}_{n}\left( a,b\right) \right) $ is defined by
\begin{equation}
\mathcal{L}_{n}\left( a,b\right) =\left\{ 
\begin{array}{c}
a\mathcal{L}_{n-1}\left( a,b\right) +\mathcal{L}_{n-2}\left( a,b\right) 
\text{, }\ n\text{ odd} \\ 
b\mathcal{L}_{n-1}\left( a,b\right) +\mathcal{L}_{n-2}\left( a,b\right) 
\text{, }\ n\text{ even}\ 
\end{array}%
\right. \,,  \label{2.0}
\end{equation}
with initial conditions
$\mathcal{L}_{0}\left( a,b\right) =\left(
\begin{array}{cc}
a & 2 \\ 
2\frac{a}{b} & -a%
\end{array}%
\right), {\mathcal{L}}_{1}\left( a,b\right) =\left(
\begin{array}{cc}
a^{2}+2\frac{a}{b} & a \\ 
\frac{a^{2}}{b} & 2\frac{a}{b}
\end{array}
\right)$ and $a, b$ are nonzero real numbers.
\end{definition}

In the following theorem, we give the $n$th general term of the matrix
sequence in (\ref{2.0}) via the bi-periodic Lucas numbers.

\begin{theorem}
\label{genel} For any integer $n\geq 0,$ we have the matrix sequence
\begin{equation}
\mathcal{L}_{n}\left( a,b\right) =\left( 
\begin{array}{cc}
\left( \frac{a}{b}\right) ^{\varepsilon (n)}l_{n+1} & l_{n} \\ 
\frac{a}{b}l_{n} & \left( \frac{a}{b}\right) ^{\varepsilon (n)}l_{n-1}
\end{array}
\right)\,,   \label{2.2}
\end{equation}
where $\varepsilon (n)$ is as in the equation (\ref{epsilon}).
\end{theorem}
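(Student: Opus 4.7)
The plan is a straightforward strong induction on $n$, with parity-based case analysis mirroring the defining bi-periodic recurrences. The whole argument will hinge on matching two things at every stage: the placement of the $(a/b)^{\varepsilon(n)}$ factor (which toggles between $1$ and $a/b$ as $n$ changes parity) and the two-branched Lucas recurrence (\ref{1.1}).

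First I would dispatch the base cases $n=0$ and $n=1$ by direct substitution into the conjectured formula (\ref{2.2}). For $n=0$ we have $\varepsilon(0)=0$, so the formula predicts entries $l_{1},\,l_{0},\,(a/b)l_{0},\,l_{-1}$; using $l_{0}=2$, $l_{1}=a$, and extending the recurrence $l_{1}=al_{0}+l_{-1}$ backwards to read $l_{-1}=-a$, the matrix becomes exactly $\mathcal{L}_{0}(a,b)$. For $n=1$ we have $\varepsilon(1)=1$ and $l_{2}=bl_{1}+l_{0}=ab+2$, which gives precisely $\mathcal{L}_{1}(a,b)$. These checks also justify interpreting $l_{-1}=-a$, a small but essential point the proof should flag.

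For the inductive step, assume (\ref{2.2}) holds for all indices less than $n\geq 2$. I would split into two cases. If $n$ is odd, then $\varepsilon(n)=1$, $\varepsilon(n-1)=0$, $\varepsilon(n-2)=1$, and (\ref{2.0}) gives
\begin{equation*}
\mathcal{L}_{n}(a,b)=a\,\mathcal{L}_{n-1}(a,b)+\mathcal{L}_{n-2}(a,b).
\end{equation*}
Plugging in the inductive formulas, each entry becomes a linear combination of two consecutive $l_{k}$'s, and the right branch of (\ref{1.1}) (for even subscripts $n+1$ and $n-1$: $l_{n+1}=bl_{n}+l_{n-1}$, $l_{n-1}=bl_{n-2}+l_{n-3}$) together with the odd branch $l_{n}=al_{n-1}+l_{n-2}$ collapses every entry to the desired form in (\ref{2.2}). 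The case $n$ even is handled identically with $b$ in place of $a$ and the parities of $\varepsilon$ swapped, invoking $l_{n+1}=al_{n}+l_{n-1}$ and $l_{n}=bl_{n-1}+l_{n-2}$.

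The main obstacle, if any, is purely bookkeeping: correctly tracking how $\varepsilon$ shifts between adjacent indices so that the scalar $(a/b)$ factors on the left-hand side (from the matrix recurrence) combine with those on the right-hand side (from the inductive hypothesis) to produce the exact $(a/b)^{\varepsilon(n)}$ pattern of (\ref{2.2}). I would recommend writing out the four matrix entries in a single display per case, so that each equality reduces visibly to one instance of (\ref{1.1}). No deeper identity from the excerpt is required; the Binet formula (\ref{fibobinet}) and the Fibonacci--Lucas relations (\ref{iliski}), (\ref{1.2}) are not needed here, and the result follows by induction.
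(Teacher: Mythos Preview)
Your proposal is correct and follows essentially the same approach as the paper, which simply states that the result follows by induction using the recurrence (\ref{2.0}). You have accurately fleshed out the details the paper omits, including the backward extension $l_{-1}=-a$ needed for the $n=0$ base case and the parity bookkeeping in the inductive step.
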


\begin{proof}
The proof can be seen by using the induction method and the Equation (\ref{2.0}).
\end{proof}

In \cite{Bilgici}, the author obtained the Cassini identity for the bi-periodic Lucas
numbers. As a consequence of Theorem \ref{genel}, in the following corollary, we
rewrite this identity with a different approximation. 
\begin{corollary}
The following equalities are valid for all positive integers:
\begin{itemize}
\item Let $\mathcal{L}_{n}\left( a,b\right) $ be as in (\ref{2.2}). Then
\begin{equation}
\det (\mathcal{L}_{n}\left( a,b\right) )=\left( ab+4\right) \left( -\frac{a}{
b}\right) ^{1+\varepsilon (n)}.
\end{equation}\label{2.3}
\item Cassini identity can also be
obtained using bi-periodic Lucas matrix sequence. That is, by using Theorem 
\ref{genel} and the Equation (\ref{2.3}), we can write
\begin{equation*}
\left( \frac{b}{a}\right) ^{\varepsilon (n+1)}l_{n+1}l_{n-1}-\left( \frac{b}{%
a}\right) ^{\varepsilon (n)}l_{n}^{2}=\left( ab+4\right) \left( -1\right)
^{n+1}.
\end{equation*}
\end{itemize}
\end{corollary}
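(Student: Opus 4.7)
The plan is a two-step argument: first establish the determinant formula (\ref{2.3}) by induction on $n$, and then obtain the Cassini identity in the second item as an algebraic rearrangement of (\ref{2.3}) via Theorem \ref{genel}.

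For the base cases $n=0,1$, a direct computation from Definition \ref{tan} gives $\det(\mathcal{L}_0(a,b)) = -a^{2} - 4a/b = -(a/b)(ab+4)$ and $\det(\mathcal{L}_1(a,b)) = (a/b)^{2}(ab+4)$, which match $(ab+4)(-a/b)^{1+\varepsilon(n)}$ at $n=0$ and $n=1$ respectively. For the inductive step, I would unify the two branches of (\ref{2.0}) into $\mathcal{L}_n = d_n\, \mathcal{L}_{n-1} + \mathcal{L}_{n-2}$ with $d_n\in\{a,b\}$ depending on the parity of $n$, and expand the $2\times 2$ determinant by multilinearity as
\[
\det(\mathcal{L}_n) \;=\; d_n^{\,2}\det(\mathcal{L}_{n-1}) + d_n\, C_n + \det(\mathcal{L}_{n-2}),
\]
where $C_n$ is the cross term built from entries of $\mathcal{L}_{n-1}$ and $\mathcal{L}_{n-2}$. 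Substituting the explicit form from Theorem \ref{genel} and using $\varepsilon(n-1)+\varepsilon(n-2)=1$ to consolidate the resulting $(a/b)^{\varepsilon(\cdot)}$ factors, one finds $C_n = (a/b)\bigl(l_n l_{n-3} - l_{n-1} l_{n-2}\bigr)$, regardless of the parity of $n$.

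To close the induction, one must collapse the bracket $l_n l_{n-3} - l_{n-1} l_{n-2}$ to a constant. Applying (\ref{1.1}) to eliminate $l_n$ and $l_{n-3}$ in favour of $l_{n-1}, l_{n-2}$, and then substituting the induction hypothesis at index $n-1$ --- which, in view of Theorem \ref{genel}, takes the shape of a Cassini-type identity (for instance $l_n l_{n-2} - (b/a) l_{n-1}^{2} = ab+4$ when $n$ is even) --- causes cancellation that reduces the bracket to $\pm a(ab+4)$. Inserting back into the expansion, the three summands combine exactly to $(ab+4)(-a/b)^{1+\varepsilon(n)}$, completing the induction.

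For the second item, Theorem \ref{genel} gives the direct evaluation
\[
\det(\mathcal{L}_n(a,b)) \;=\; \left(\tfrac{a}{b}\right)^{2\varepsilon(n)} l_{n+1} l_{n-1} \;-\; \tfrac{a}{b}\, l_n^{2}.
\]
Equating with the right-hand side of (\ref{2.3}) and multiplying both sides by $(b/a)^{1+\varepsilon(n)}$, the left-hand side becomes $(b/a)^{\varepsilon(n+1)} l_{n+1} l_{n-1} - (b/a)^{\varepsilon(n)} l_n^{2}$ thanks to $\varepsilon(n+1)=1-\varepsilon(n)$, while the right-hand side simplifies to $(ab+4)(-1)^{1+\varepsilon(n)} = (ab+4)(-1)^{n+1}$, yielding the claimed identity. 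The main obstacle is the bracket collapse inside the induction: it requires carefully tracking the parity-dependent $(a/b)$ factors at indices $n-1$ and $n-2$, and invoking the induction hypothesis in its Cassini form at the previous index to simplify $l_n l_{n-3} - l_{n-1} l_{n-2}$; once this step is carried out, the remaining computation is routine.
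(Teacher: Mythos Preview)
Your proof is correct, but it takes a substantially different route from the paper. The paper gives no explicit proof here: the sentence preceding the corollary points out that the Cassini identity for the $l_n$ is already established in \cite{Bilgici}, and the corollary is meant simply as a repackaging of that identity via Theorem \ref{genel}. In other words, the paper's implicit argument is the trivial one: compute $\det(\mathcal{L}_n(a,b))=(a/b)^{2\varepsilon(n)}l_{n+1}l_{n-1}-(a/b)l_n^2$ from Theorem \ref{genel}, plug in Bilgici's Cassini identity, and read off (\ref{2.3}); the second bullet is then exactly the reverse rearrangement you carry out.

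Your approach instead proves (\ref{2.3}) from scratch by induction on the recurrence (\ref{2.0}), expanding the determinant bilinearly and collapsing the cross term $C_n=(a/b)(l_n l_{n-3}-l_{n-1}l_{n-2})$. I checked the collapse step: writing $l_{n-3}=l_{n-1}-a^{\varepsilon(n)}b^{\varepsilon(n+1)}l_{n-2}$ from (\ref{1.1}) and then substituting the induction hypothesis at index $n-1$ in its Cassini form (exactly as you indicate) does reduce the bracket to $(-1)^{n}a(ab+4)$, and the three summands combine to the claimed value in each parity case. The second bullet is handled identically in both approaches.

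What you gain is self-containment: your argument does not appeal to \cite{Bilgici} and so gives an independent proof of the bi-periodic Lucas Cassini identity. What the paper gains is brevity: since Cassini is quoted, the whole corollary is a one-line determinant computation. Both are valid; yours is the harder but more informative route.
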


\begin{theorem}\label{oz1}
For every $n\in \mathbb{N}$, the following statements are true:
\begin{itemize}
\item[$\boldsymbol{(i)}$] \ \ $\mathcal{L}_{n-1}\left( a,b\right) +\mathcal{L}_{n+1}\left(
a,b\right) =\frac{a}{b}\left( ab+4\right) \mathcal{F}_{n}\left( a,b\right) ,$

\item[$\boldsymbol{(ii)}$] \ \ $\mathcal{F}_{n-1}\left( a,b\right) +\mathcal{F}%
_{n+1}\left( a,b\right) =\frac{b}{a}\mathcal{L}_{n}\left( a,b\right) .$
\end{itemize}
\end{theorem}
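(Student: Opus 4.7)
The plan is to prove both identities by a direct entry-by-entry comparison, using the explicit forms of $\mathcal{F}_{n}(a,b)$ from (\ref{1.3}) and of $\mathcal{L}_{n}(a,b)$ from Theorem \ref{genel}, together with the scalar relations (\ref{iliski}) and (\ref{1.2}). The only real arithmetic to watch is the behaviour of the parity exponent $\varepsilon$, but that is easy: since $n-1$ and $n+1$ share the opposite parity of $n$, one has $\varepsilon(n-1)=\varepsilon(n+1)=1-\varepsilon(n)$, which is exactly the ``switch'' needed to transport the $(a/b)^{\varepsilon(\cdot)}$ prefactor past one step.

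For part $\boldsymbol{(i)}$, I would write out $\mathcal{L}_{n-1}(a,b)+\mathcal{L}_{n+1}(a,b)$ using (\ref{2.2}). The diagonal entries become $\left(\tfrac{a}{b}\right)^{1-\varepsilon(n)}(l_{n}+l_{n+2})$ in position $(1,1)$ and $\left(\tfrac{a}{b}\right)^{1-\varepsilon(n)}(l_{n-2}+l_{n})$ in position $(2,2)$; applying (\ref{1.2}) converts each pair $l_{k-1}+l_{k+1}$ into $(ab+4)q_{k}$. The off-diagonal entries are $l_{n-1}+l_{n+1}=(ab+4)q_{n}$ and $\tfrac{a}{b}(l_{n-1}+l_{n+1})=\tfrac{a}{b}(ab+4)q_{n}$. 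Factoring out $\tfrac{a}{b}(ab+4)$ and using $\left(\tfrac{a}{b}\right)^{1-\varepsilon(n)}=\tfrac{a}{b}\left(\tfrac{b}{a}\right)^{\varepsilon(n)}$, what remains is exactly $\mathcal{F}_{n}(a,b)$ as written in (\ref{1.3}).

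Part $\boldsymbol{(ii)}$ is the mirror argument. Summing $\mathcal{F}_{n-1}(a,b)+\mathcal{F}_{n+1}(a,b)$ via (\ref{1.3}) gives $\left(\tfrac{b}{a}\right)^{1-\varepsilon(n)}(q_{n}+q_{n+2})$ at $(1,1)$, $\left(\tfrac{b}{a}\right)^{1-\varepsilon(n)}(q_{n-2}+q_{n})$ at $(2,2)$, $\tfrac{b}{a}(q_{n-1}+q_{n+1})$ at $(1,2)$, and $q_{n-1}+q_{n+1}$ at $(2,1)$. Now relation (\ref{iliski}) collapses each pair $q_{k-1}+q_{k+1}$ into $l_{k}$, and pulling out the common factor $\tfrac{b}{a}$ reproduces (\ref{2.2}) on the nose.

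The only ``obstacle'' worth flagging is bookkeeping of the exponent $\varepsilon$: one must be careful that $\varepsilon(n-1)=\varepsilon(n+1)=1-\varepsilon(n)$ matches against the $\tfrac{a}{b}$ or $\tfrac{b}{a}$ factor coming from the right-hand side. Once that identity is used consistently, both statements reduce to the two scalar identities (\ref{iliski}) and (\ref{1.2}), so the whole argument is essentially a direct computation rather than an induction.
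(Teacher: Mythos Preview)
Your proposal is correct and follows essentially the same approach as the paper: an entry-by-entry computation using the explicit matrix forms (\ref{1.3}) and (\ref{2.2}) together with the scalar identities (\ref{iliski}) and (\ref{1.2}), with the parity bookkeeping $\varepsilon(n\pm1)=1-\varepsilon(n)$ handling the $(a/b)$-exponents. The paper carries out only part $(i)$ in detail and declares $(ii)$ analogous, whereas you sketch both; otherwise the arguments are identical.
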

\begin{proof}
We will only prove $(i)$ since the proof of the other equality is similar to $(i)$. If we take  $T$ instead of $\mathcal{L}_{n-1}\left( a,b\right) +\mathcal{L}_{n+1}\left( a,b\right)$,  by using the Theorem \ref{genel}, we can write
\begin{eqnarray*}
T&=&\left( 
\begin{array}{cc}
\left( \frac{a}{b}\right) ^{1-\varepsilon (n)}l_{n} & l_{n-1} \\ 
\frac{a}{b}l_{n-1} & \left( \frac{a}{b}\right) ^{1-\varepsilon (n)}l_{n-2}%
\end{array}
\right) +\left( 
\begin{array}{cc}
\left( \frac{a}{b}\right) ^{1-\varepsilon (n)}l_{n+2} & l_{n+1} \\ 
\frac{a}{b}l_{n+1} & \left( \frac{a}{b}\right) ^{1-\varepsilon (n)}l_{n}%
\end{array}
\right) \\
&=&\left( 
\begin{array}{cc}
\left( \frac{a}{b}\right) ^{1-\varepsilon (n)}\left( l_{n+2}+l_{n}\right) & 
\left( l_{n+1}+l_{n-1}\right) \\ 
\frac{a}{b}\left( l_{n+1}+l_{n-1}\right) & \left( \frac{a}{b}\right)
^{1-\varepsilon (n)}\left( l_{n}+l_{n-2}\right)%
\end{array}
\right)\,.
\end{eqnarray*}
From the Equation (\ref{1.2}), we get 
\begin{eqnarray*}
T&=&\left( 
\begin{array}{cc}
\left( \frac{a}{b}\right) ^{1-\varepsilon (n)}\left( ab+4\right) q_{n+1} & 
\left( ab+4\right) q_{n} \\ 
\frac{a}{b}\left( ab+4\right) q_{n} & \left( \frac{a}{b}\right)
^{1-\varepsilon (n)}\left( ab+4\right) q_{n-1}%
\end{array}%
\right) \\
&=&\frac{a}{b}\left( ab+4\right) \left( 
\begin{array}{cc}
\left( \frac{b}{a}\right) ^{\varepsilon (n)}q_{n+1} & \frac{b}{a}q_{n} \\ 
q_{n} & \left( \frac{b}{a}\right) ^{\varepsilon (n)}q_{n-1}%
\end{array}
\right)
\end{eqnarray*}
which is desired.
\end{proof}

\begin{theorem}\label{binet}
For every $n\in \mathbb{N}$, we write the Binet formula
for the bi-periodic Lucas matrix sequence as the form 
\begin{equation*}
\mathcal{L}_{n}\left( a,b\right) =A\alpha ^{n}-B\beta ^{n}
\end{equation*}
where 
$
A =\dfrac{b\mathcal{L}_{1}\left( a,b\right) +\alpha \mathcal{L}_{0}\left(
a,b\right) -ab\mathcal{L}_{0}\left( a,b\right) }{b^{\varepsilon (n)}\left(
ab\right) ^{\left\lfloor \frac{n}{2}\right\rfloor }\left( \alpha -\beta
\right) }$, $B =\dfrac{b\mathcal{L}_{1}\left( a,b\right) +\beta \mathcal{L}_{0}\left(
a,b\right) -ab\mathcal{L}_{0}\left( a,b\right) }{b^{\varepsilon (n)}\left(
ab\right) ^{\left\lfloor \frac{n}{2}\right\rfloor }\left( \alpha -\beta
\right) }$
such that $\alpha =\frac{ab+\sqrt{a^{2}b^{2}+4ab}}{2}, \beta =\frac{ab-\sqrt{%
a^{2}b^{2}+4ab}}{2}$.
\end{theorem}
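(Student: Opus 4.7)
The plan is to exploit Theorem~\ref{genel}, which already expresses each of the four entries of $\mathcal{L}_n(a,b)$ in terms of $l_{n-1}$, $l_n$ and $l_{n+1}$, and to combine this with the classical Binet formula for the bi-periodic Lucas numbers (available in \cite{Bilgici,EdsonYayenie}), which writes $l_n$ as a linear combination of $\alpha^n$ and $\beta^n$ with coefficients depending on the parity of $n$. Recall that $\alpha$ and $\beta$ are the roots of $x^{2}-abx-ab=0$, so that $\alpha+\beta=ab$ and $\alpha\beta=-ab$.

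First, I would substitute the scalar Binet formula for $l_n$ into each entry of the matrix in (\ref{2.2}) and collect the terms by powers of $\alpha$ and $\beta$. This exhibits $\mathcal{L}_n(a,b)$ in the form $P_n\alpha^{n}+Q_n\beta^{n}$, where $P_n$ and $Q_n$ are $2\times 2$ matrices built from $a^{\varepsilon(n)}$, $(ab)^{-\lfloor n/2\rfloor}$, and from the extra powers of $\alpha,\beta$ that appear when $l_{n\pm 1}$ is re-expressed in terms of $\alpha^n$ and $\beta^n$ via the characteristic equation.

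Next, I would match $P_n$ with $A$ and $Q_n$ with $-B$. Using the initial data $\mathcal{L}_0(a,b)$ and $\mathcal{L}_1(a,b)$ from Definition~\ref{tan}, together with $\alpha+\beta=ab$ and $\alpha\beta=-ab$, the expression $b\mathcal{L}_1(a,b)+\alpha\mathcal{L}_0(a,b)-ab\mathcal{L}_0(a,b)$ can be expanded entry by entry; collapsing it with $\alpha^{2}=ab\alpha+ab$ should reproduce exactly $P_n(\alpha-\beta)\cdot b^{\varepsilon(n)}(ab)^{\lfloor n/2\rfloor}$, and the computation for $B$ is symmetric under $\alpha\leftrightarrow\beta$.

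I expect the main obstacle to be the parity bookkeeping: both the entries of (\ref{2.2}) and the denominators $b^{\varepsilon(n)}(ab)^{\lfloor n/2\rfloor}$ in $A$ and $B$ depend on the parity of $n$, and they do so in a slightly mismatched way. The cleanest remedy is to split the verification into the cases $n$ even and $n$ odd, absorbing the factor $a^{\varepsilon(n)}$ into the denominator before comparison. An equivalent route, avoiding any appeal to an external Binet formula, is induction on $n$ directly from (\ref{2.0}): the base cases $n=0,1$ follow by substitution, and each inductive step collapses through $\alpha^{2}=ab\alpha+ab$ and $\beta^{2}=ab\beta+ab$, but the parity-dependent shift of $\lfloor n/2\rfloor$ when passing from $n-1$ to $n$ still has to be handled by hand.
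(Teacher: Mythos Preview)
Your plan is workable but takes a different route from the paper. The paper does not go entry by entry via the scalar Binet formula for $l_n$, nor does it induct on the recurrence~(\ref{2.0}); instead it invokes condition~$(ii)$ of Theorem~\ref{oz1}, namely $\mathcal{L}_n(a,b)=\tfrac{a}{b}\bigl(\mathcal{F}_{n-1}(a,b)+\mathcal{F}_{n+1}(a,b)\bigr)$, and then substitutes the already-established Binet formula~(\ref{fibobinet}) for the bi-periodic Fibonacci matrix sequence from~\cite{CoskunTaskara}. This keeps the computation at the matrix level throughout and short-circuits precisely the parity bookkeeping you flag as the main obstacle: the factors $b^{\varepsilon(n)}(ab)^{\lfloor n/2\rfloor}$ are inherited directly from the Fibonacci side rather than reassembled from scratch out of $l_{n-1}$, $l_n$, $l_{n+1}$. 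Your approach, by contrast, is more self-contained---it does not rely on the companion Fibonacci matrix result---and the induction variant you sketch would give an independent verification without appealing to any external Binet identity at all; the trade-off is the explicit even/odd case split you anticipate.
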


\begin{proof}
It is easily found by benefitting the condition $(ii)$ of Theorem \ref{oz1} and the Equation (\ref{fibobinet}).
\end{proof}

\begin{theorem}\label{top-ure}
The following equalities are hold:
\begin{itemize}
\item[$\boldsymbol{(i)}$] The generating function for bi-periodic Lucas matrix sequence is
\begin{equation*}
\sum\limits_{i=0}^{\infty }\mathcal{L}_{i}\left( a,b\right) x^{i}=\dfrac{1}{%
1-\left( ab+2\right) x^{2}+x^{4}}\left( 
\begin{array}{cc}
A_{2} & B_{2} \\ 
\frac{a}{b}B_{2} & C_{2}%
\end{array}
\right)\,,
\end{equation*}
where $A_{2}=a+\left( a^{2}+2\frac{a}{b}\right) x+ax^{2}-2\frac{a}{b}x^{3},$ 
$B_{2}=2+ax-\left( ab+2\right) x^{2}+ax^{3}$ and $C_{2}=-a+2\frac{a}{b}%
x+(3+ab)ax^{2}-\left( a^{2}+2\frac{a}{b}\right) x^{3}$.

\item[$\boldsymbol{(ii)}$] For $k\geq 0$, there exist
\begin{equation*}
\sum\limits_{k=0}^{n}\mathcal{L}_{k}\left( a,b\right) x^{-k}=\frac{1}{%
1-(ab+2)x^{2}+x^{4}}\left\{ 
\begin{array}{c}
\dfrac{\mathcal{L}_{n-1}\left( a,b\right) }{x^{n-1}}-\dfrac{\mathcal{L}%
_{n+1}\left( a,b\right) }{x^{n-3}}\\
+\dfrac{\mathcal{L}_{n}\left( a,b\right) }{x^{n}}-\dfrac{\mathcal{L}%
_{n+2}\left( a,b\right) }{x^{n+2}}\\
+x^{4}\mathcal{L}_{0}\left( a,b\right) +x^{3}\mathcal{L}_{1}\left( a,b\right)
\\
-x^{2}\left[ \left( ab+1\right) \mathcal{L}_{0}\left( a,b\right) -b\mathcal{L%
}_{1}\left( a,b\right) \right]\\
-x\left( \mathcal{L}_{1}\left( a,b\right) -a\mathcal{L}_{0}\left( a,b\right)
\right)
\end{array}
\right\}\,.
\end{equation*}

\item[$\boldsymbol{(iii)}$] For $k>0$, we have
\begin{equation*}
\sum\limits_{k=0}^{\infty }\mathcal{L}_{k}\left( a,b\right) x^{-k}=\frac{x}{%
1-(ab+2)x^{2}+x^{4}}\left(
\begin{array}{cc}
D & E \\ 
\frac{a}{b}E & F
\end{array}
\right)\,,
\end{equation*}
where $D=ax^{3}+\left( a^{2}+2\frac{a}{b}\right) x^{2}-ax+2\frac{a}{b},$ $%
E=2x^{3}+ax^{2}+\left( ab+2\right) x+a$ and $F=-ax^{3}+2\frac{a}{b}%
x^{2}-\left( a^{2}b+3a\right) x+a^{2}+2\frac{a}{b}$.

\item[$\boldsymbol{(iv)}$] For $k\geq 0$, the summation of the bi-periodic Lucas matrix sequence is
\begin{equation*}
\sum\limits_{k=0}^{n-1}\mathcal{L}_{k}\left( a,b\right) =\frac{1}{ab}\left\{ 
\begin{array}{c}
b^{\varepsilon \left( n\right) }a^{1-\varepsilon \left( n\right) }\mathcal{L}%
_{n}\left( a,b\right) +b^{1-\varepsilon \left( n\right) }a^{\varepsilon
\left( n\right) }\mathcal{L}_{n-1}\left( a,b\right) \\ 
-b\mathcal{L}_{1}\left( a,b\right) +ab\mathcal{L}_{0}\left( a,b\right) -a%
\mathcal{L}_{0}\left( a,b\right)%
\end{array}
\right\}\,.
\end{equation*}
\end{itemize}
\end{theorem}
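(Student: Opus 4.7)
My plan is to first establish a single uniform fourth-order recurrence that is independent of the parity of $n$, namely
\[
\mathcal{L}_{n+2}(a,b) \;=\; (ab+2)\,\mathcal{L}_{n}(a,b)\;-\;\mathcal{L}_{n-2}(a,b),\qquad n\geq 2.
\]
This identity explains why the polynomial $1-(ab+2)x^{2}+x^{4}$ appears as the common denominator in parts (i)--(iii). To derive it, I will chain two applications of the piecewise recurrence (\ref{2.0}): when $n$ is even, substituting $\mathcal{L}_{n+1}=a\mathcal{L}_{n}+\mathcal{L}_{n-1}$ into $\mathcal{L}_{n+2}=b\mathcal{L}_{n+1}+\mathcal{L}_{n}$ gives $\mathcal{L}_{n+2}=(ab+1)\mathcal{L}_{n}+b\mathcal{L}_{n-1}$, and then eliminating $b\mathcal{L}_{n-1}=\mathcal{L}_{n}-\mathcal{L}_{n-2}$ yields the claim; the odd case is symmetric with $a$ and $b$ interchanged.

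For part (i), I would multiply $g(x)=\sum_{i\ge 0}\mathcal{L}_{i}(a,b)\,x^{i}$ by $1-(ab+2)x^{2}+x^{4}$. By the uniform recurrence above, every coefficient of $x^{n}$ with $n\geq 4$ cancels, leaving a cubic matrix polynomial whose coefficients involve only $\mathcal{L}_{0},\mathcal{L}_{1},\mathcal{L}_{2},\mathcal{L}_{3}$. Using $\mathcal{L}_{2}=b\mathcal{L}_{1}+\mathcal{L}_{0}$ and $\mathcal{L}_{3}=a\mathcal{L}_{2}+\mathcal{L}_{1}$ together with the initial matrices of Definition \ref{tan}, reading off entries should recover the stated $A_{2},B_{2},C_{2}$, and incidentally confirm the $(2,1)$-$(1,2)$ proportionality $\frac{a}{b}$ inherited from the form in Theorem \ref{genel}.

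Items (ii) and (iii) follow the same template applied to the Laurent-type series $\sum \mathcal{L}_{k}(a,b)\,x^{-k}$. Multiplying through by $1-(ab+2)x^{2}+x^{4}$ and reindexing the three resulting sums, the interior terms cancel via the four-term recurrence; for (ii) the surviving boundary consists of four upper-end contributions built from $\mathcal{L}_{n-1},\mathcal{L}_{n},\mathcal{L}_{n+1},\mathcal{L}_{n+2}$ together with lower-end terms involving $\mathcal{L}_{0}$ and $\mathcal{L}_{1}$ exactly as listed, while in (iii) only the lower boundary survives, a factor of $x$ pulls out of the resulting matrix polynomial, and substituting the explicit $\mathcal{L}_{0},\mathcal{L}_{1},\mathcal{L}_{2},\mathcal{L}_{3}$ produces the entries $D,E,F$. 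Finally, for part (iv), I would avoid generating functions entirely and telescope the original recurrence by parity: the relation $b\mathcal{L}_{k-1}=\mathcal{L}_{k}-\mathcal{L}_{k-2}$ (valid at even $k$) summed over the even indices $k=2,4,\ldots$ yields $\sum_{\mathrm{odd}\,j<n}\mathcal{L}_{j}=(\mathcal{L}_{n}-\mathcal{L}_{0})/b$ or $(\mathcal{L}_{n-1}-\mathcal{L}_{1})/b$ depending on the parity of $n$, and symmetrically for the odd-$k$ rule with factor $a$; recombining the two subsums and the omitted term $\mathcal{L}_{0}$ produces the formula, with the split into cases $\varepsilon(n)=0$ and $\varepsilon(n)=1$ accounting precisely for the exponents $\varepsilon(n)$ and $1-\varepsilon(n)$ multiplying $\mathcal{L}_{n}$ and $\mathcal{L}_{n-1}$.

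The main obstacle I anticipate is not any single deep step but the careful bookkeeping of parity throughout: the factors $\varepsilon(n)$ and $1-\varepsilon(n)$ have to be tracked through each boundary calculation, and in (ii) one must check that the many leftover Laurent monomials on the right-hand side do assemble into the compact expression displayed. Once the uniform recurrence of Step 1 is in hand, however, every assertion reduces to routine, if careful, matrix-coefficient algebra.
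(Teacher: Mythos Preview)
Your proposal is correct, and for part (i) it is in fact cleaner than the route taken in the paper. The paper first multiplies $G(x)$ by $1-bx-x^{2}$; since the even-parity rule kills the even coefficients, what survives is (up to boundary terms) $(a-b)x$ times the even-index subseries $g(x)=\sum_{i\ge 1}\mathcal{L}_{2i}(a,b)x^{2i}$. Only then does the paper derive the three-term relation $\mathcal{L}_{2i}=(ab+2)\mathcal{L}_{2i-2}-\mathcal{L}_{2i-4}$ and use it to find $g(x)$, finally substituting back into $G(x)$. You observe at the outset that the identical relation $\mathcal{L}_{n+2}=(ab+2)\mathcal{L}_{n}-\mathcal{L}_{n-2}$ holds for \emph{all} $n\ge 2$, so multiplying $G(x)$ directly by $1-(ab+2)x^{2}+x^{4}$ collapses the series to a cubic in one step; the resulting numerator $\mathcal{L}_{0}+x\mathcal{L}_{1}+x^{2}\bigl(b\mathcal{L}_{1}-(ab+1)\mathcal{L}_{0}\bigr)+x^{3}\bigl(a\mathcal{L}_{0}-\mathcal{L}_{1}\bigr)$ coincides with what the paper obtains after recombination. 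Your route avoids the auxiliary function $g(x)$ entirely.

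For (ii)--(iv) the paper does not give details but directs the reader to the Binet formula of Theorem \ref{binet}. Your alternative---reusing the uniform four-term recurrence to telescope the Laurent sums in (ii)--(iii), and splitting $\sum_{k=0}^{n-1}\mathcal{L}_{k}$ by parity to telescope via $b\mathcal{L}_{k-1}=\mathcal{L}_{k}-\mathcal{L}_{k-2}$ (even $k$) and $a\mathcal{L}_{k-1}=\mathcal{L}_{k}-\mathcal{L}_{k-2}$ (odd $k$) for (iv)---is more elementary: it never invokes $\alpha,\beta$ and keeps everything at the level of the defining recurrence. The Binet approach would instead substitute the closed form $A\alpha^{n}-B\beta^{n}$ and sum two geometric series, which is mechanical but requires tracking the parity-dependent normalisations $b^{\varepsilon(n)}(ab)^{\lfloor n/2\rfloor}$ inside $A$ and $B$. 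Either method works; yours is arguably the more transparent of the two.
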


\begin{proof}
We establish just condition $(i)$ . The proofs of $(ii),(iii)$ and $(iv)$ can be done by taking account Binet formula of this matrix sequence.
\begin{itemize}
\item[$\boldsymbol{(i)}$] Assume that $G(x)$ is the generating function for the sequence $
\left\{ \mathcal{L}_{n}\left( a,b\right) \right\} _{n\in \mathbb{N}}$. Then,
we have
\begin{equation*}
G\left( x\right) =\sum\limits_{i=0}^{\infty }\mathcal{L}_{i}\left(
a,b\right) x^{i}=\mathcal{L}_{0}\left( a,b\right) +\mathcal{L}_{1}\left(
a,b\right) x+\sum\limits_{i=2}^{\infty }\mathcal{L}_{i}\left( a,b\right)
x^{i}.
\end{equation*}
Thus, we can write
\begin{eqnarray*}
\left( 1-bx-x^{2}\right) G\left( x\right)  &=&\mathcal{L}_{0}\left(
a,b\right) +x\left( \mathcal{L}_{1}\left( a,b\right) -b\mathcal{L}_{0}\left(
a,b\right) \right)  \\
&&+\sum\limits_{i=2}^{\infty }\left( \mathcal{L}_{i}\left( a,b\right) -b%
\mathcal{L}_{i-1}\left( a,b\right) -\mathcal{L}_{i-2}\left( a,b\right)
\right) x^{i}.
\end{eqnarray*}

Since $\mathcal{L}_{2i}\left( a,b\right) =b\mathcal{L}_{2i-1}\left(
a,b\right) +\mathcal{L}_{2i-2}\left( a,b\right) $, we get
\begin{eqnarray*}
\left( 1-bx-x^{2}\right) G\left( x\right) &=&\mathcal{L}_{0}\left(
a,b\right) +x\left( \mathcal{L}_{1}\left( a,b\right) -b\mathcal{L}_{0}\left(
a,b\right) \right) \\
&&+\sum\limits_{i=1}^{\infty }\left( \mathcal{L}_{2i+1}\left( a,b\right) -b%
\mathcal{L}_{2i}\left( a,b\right) -\mathcal{L}_{2i-1}\left( a,b\right)
\right) x^{2i+1} \\
&=&\mathcal{L}_{0}\left( a,b\right) +x\left( \mathcal{L}_{1}\left(
a,b\right) -b\mathcal{L}_{0}\left( a,b\right) \right) \\
&&+\left( a-b\right) x\sum\limits_{i=1}^{\infty }\mathcal{L}_{2i}\left(
a,b\right) x^{2i}.
\end{eqnarray*}

Now, let
\begin{equation*}
g\left( x\right) =\sum\limits_{i=1}^{\infty }\mathcal{L}_{2i}\left(
a,b\right) x^{2i}.
\end{equation*}

Since
\begin{eqnarray*}
\mathcal{L}_{2i}\left( a,b\right) &=&b\mathcal{L}_{2i-1}\left( a,b\right) +%
\mathcal{L}_{2i-2}\left( a,b\right) \\
&=&(ab+1)\mathcal{L}_{2i-2}\left( a,b\right) +b\mathcal{L}_{2i-3}\left(
a,b\right) \\
&=&(ab+2)\mathcal{L}_{2i-2}\left( a,b\right) -\mathcal{L}_{2i-4}\left(
a,b\right) ,
\end{eqnarray*}

we have
\begin{eqnarray*}
\left( 1-\left( ab+2\right) x^{2}+x^{4}\right) g\left( x\right)  &=&\mathcal{%
L}_{2}\left( a,b\right) x^{2}+\mathcal{L}_{4}\left( a,b\right) x^{4}-(ab+2)%
\mathcal{L}_{2}\left( a,b\right) x^{4} \\
&&+\sum\limits_{i=3}^{\infty }\left[ \mathcal{L}_{2i}\left( a,b\right)
-(ab+2)\mathcal{L}_{2i-2}\left( a,b\right) +\mathcal{L}_{2i-4}\left(
a,b\right) \right] x^{2i}.
\end{eqnarray*}

Therefore,
\begin{eqnarray*}
g\left( x\right) &=&\frac{\mathcal{L}_{2}\left( a,b\right) x^{2}+\mathcal{L}%
_{4}\left( a,b\right) x^{4}-(ab+2)\mathcal{L}_{2}\left( a,b\right) x^{4}}{%
1-\left( ab+2\right) x^{2}+x^{4}} \\
&=&\frac{(\mathcal{L}_{0}\left( a,b\right) +b\mathcal{L}_{1}\left(
a,b\right) )x^{2}-\mathcal{L}_{0}\left( a,b\right) x^{4}}{1-\left(
ab+2\right) x^{2}+x^{4}}
\end{eqnarray*}%
and as a result, we get
\begin{equation*}
G\left( x\right) =\frac{1}{1-\left( ab+2\right) x^{2}+x^{4}}\left\{
\begin{array}{c}
\mathcal{L}_{0}\left( a,b\right) +x\mathcal{L}_{1}\left( a,b\right)\\
+x^{2}\left( b\mathcal{L}_{1}\left( a,b\right) -\mathcal{L}_{0}\left(
a,b\right) -ab\mathcal{L}_{0}\left( a,b\right) \right) \\ 
+x^{3}\left( a\mathcal{L}_{0}\left( a,b\right) -\mathcal{L}_{1}\left(
a,b\right) \right)%
\end{array}
\right\}\,.
\end{equation*}%
where $\mathcal{L}_{0}\left( a,b\right) =\left( 
\begin{array}{cc}
a & 2 \\ 
2\frac{a}{b} & -a%
\end{array}%
\right) ,\mathcal{L}_{1}\left( a,b\right) =\left( 
\begin{array}{cc}
a^{2}+2\frac{a}{b} & a \\ 
\frac{a^{2}}{b} & 2\frac{a}{b}%
\end{array}
\right)$. Thus, the desired expression is obtained.
\end{itemize}
\end{proof}

\section{Relationships between the bi-periodic Fibonacci and Lucas matrix sequences}
\qquad
The following theorem express that there always exist some interpasses
between the bi-periodic Fibonacci and Lucas matrix sequences.

\begin{theorem}
\label{oz2} For the matrix sequences $\left( \mathcal{F}_{n}\left(
a,b\right) \right) _{n\in \mathbb{N}}$ and $\left( \mathcal{L}_{n}\left( a,b\right) \right) _{n\in \mathbb{N}}$, the following equalities are satisfied:
\begin{itemize}
\item[$\boldsymbol{(i)}$] $\mathcal{L}_{0}\left( a,b\right) \mathcal{F}_{n}\left(
a,b\right) =\left( \frac{b}{a}\right) ^{\varepsilon \left( n\right) }%
\mathcal{L}_{n}\left( a,b\right) =\left( \frac{a}{b}\right) ^{\varepsilon
\left( n+1\right) }\left( \mathcal{F}_{n-1}\left( a,b\right) +\mathcal{F}%
_{n+1}\left( a,b\right) \right)$,

\item[$\boldsymbol{(ii)}$]$\mathcal{F}_{n}\left( a,b\right) \mathcal{L}_{0}\left(
a,b\right) =\mathcal{L}_{0}\left( a,b\right) \mathcal{F}_{n}\left(
a,b\right) =\left( \frac{b}{a}\right) ^{\varepsilon \left( n\right) }%
\mathcal{L}_{n}\left( a,b\right)$,

\item[$\boldsymbol{(iii)}$] \ $\mathcal{F}_{1}\left( a,b\right) \mathcal{L}_{n}\left(
a,b\right) =\left( \frac{b}{a}\right) ^{\varepsilon \left( n\right) }\left( 
\mathcal{F}_{n+2}\left( a,b\right) +\mathcal{F}_{n}\left( a,b\right) \right)
=\left( \frac{b}{a}\right) ^{\varepsilon \left( n+1\right) }\mathcal{L}%
_{n+1}\left( a,b\right)$,

\item[$\boldsymbol{(iv)}$] \ $\ \mathcal{L}_{n}\left( a,b\right) \mathcal{F}_{1}\left(
a,b\right) =\mathcal{F}_{1}\left( a,b\right) \mathcal{L}_{n}\left(
a,b\right) =\left( \frac{b}{a}\right) ^{\varepsilon \left( n+1\right) }%
\mathcal{L}_{n+1}\left( a,b\right)$.
\end{itemize}
\end{theorem}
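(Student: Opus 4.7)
The four claims pair naturally: parts $(i)$ and $(ii)$ both concern the product $\mathcal{L}_0\mathcal{F}_n$, while $(iii)$ and $(iv)$ both concern $\mathcal{F}_1\mathcal{L}_n$. The plan is to organize the argument around the two central matrix identities
\begin{equation*}
\mathcal{L}_0\mathcal{F}_n=(b/a)^{\varepsilon(n)}\mathcal{L}_n \quad\text{and}\quad \mathcal{F}_1\mathcal{L}_n=(b/a)^{\varepsilon(n+1)}\mathcal{L}_{n+1},
\end{equation*}
which I would prove by direct matrix multiplication, and then deduce the remaining equalities algebraically by invoking Theorem \ref{oz1}$(ii)$.

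For the first identity, I would substitute the explicit form (\ref{1.3}) for $\mathcal{F}_n$ and the given $\mathcal{L}_0$ into the product and expand entry by entry. Because $\varepsilon(n)$ appears as an exponent inside $\mathcal{F}_n$, a case split on the parity of $n$ is needed. In each case the resulting entries are linear combinations of consecutive $q_k$'s that collapse into a bi-periodic Lucas number through the recursion (\ref{1.0}) and the identity $l_n=q_{n-1}+q_{n+1}$ from (\ref{iliski}); matching against formula (\ref{2.2}) then yields $\mathcal{L}_0\mathcal{F}_n=(b/a)^{\varepsilon(n)}\mathcal{L}_n$. The commutativity $\mathcal{F}_n\mathcal{L}_0=\mathcal{L}_0\mathcal{F}_n$ claimed in $(ii)$ is handled by running the analogous entry-by-entry computation on the product in the opposite order and observing that both sides reduce to the same expression in the $l_k$'s.

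Part $(i)$'s extra equality is then an immediate algebraic consequence: Theorem \ref{oz1}$(ii)$ gives $\mathcal{L}_n=(a/b)(\mathcal{F}_{n-1}+\mathcal{F}_{n+1})$, so
\begin{equation*}
(b/a)^{\varepsilon(n)}\mathcal{L}_n=(a/b)^{1-\varepsilon(n)}(\mathcal{F}_{n-1}+\mathcal{F}_{n+1})=(a/b)^{\varepsilon(n+1)}(\mathcal{F}_{n-1}+\mathcal{F}_{n+1}),
\end{equation*}
using $\varepsilon(n+1)=1-\varepsilon(n)$. The second central identity is proved by the analogous direct computation, now using $\mathcal{F}_1=\left(\begin{smallmatrix}b & b/a \\ 1 & 0\end{smallmatrix}\right)$, formula (\ref{2.2}) for $\mathcal{L}_n$, and the bi-periodic Lucas recursion (\ref{1.1}) to collapse the entries into entries of $\mathcal{L}_{n+1}$. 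Commutativity with $\mathcal{L}_n$ in $(iv)$ follows by the same argument applied to $\mathcal{L}_n\mathcal{F}_1$ from the other side, and the middle expression in $(iii)$ is then obtained by invoking Theorem \ref{oz1}$(ii)$ at index $n+1$ to rewrite $\mathcal{L}_{n+1}$ as $(a/b)(\mathcal{F}_n+\mathcal{F}_{n+2})$.

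The main obstacle is purely notational: the parity-dependent exponents $\varepsilon(n)$ and $\varepsilon(n+1)$ force case splits throughout, and within each case one must invoke the correct branch of the bi-periodic recursion -- coefficient $a$ or $b$ -- so that the collapsed entry carries precisely the scalar factor required by the formula (\ref{2.2}) on the right-hand side. Once the two central identities have been verified with this case-by-case bookkeeping, everything else falls out from Theorem \ref{oz1}$(ii)$ together with elementary manipulation of the $(b/a)$-exponents.
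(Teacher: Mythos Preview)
Your proposal is correct and follows essentially the same approach as the paper: the paper's own proof is a single line citing equations (\ref{iliski}), (\ref{1.2}), (\ref{1.3}) and (\ref{2.2}) --- i.e., the explicit entrywise forms of $\mathcal{F}_n$ and $\mathcal{L}_n$ together with the scalar identities linking $q_k$ and $l_k$ --- which is exactly the direct entry-by-entry computation you describe. Your use of Theorem~\ref{oz1}$(ii)$ for the secondary equalities is just a packaged form of (\ref{iliski}), so there is no substantive divergence.
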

\begin{proof}
From the Equations (\ref{iliski}), (\ref{1.2}), (\ref{1.3}) and (\ref{2.2}), desired expressions are obtained.
\end{proof}

\begin{theorem}\label{oz3}
For $m,n\in\mathbb{N}$, we have
\begin{itemize}
\item[$\boldsymbol{(i)}$] $\mathcal{F}_{m}\left( a,b\right) \mathcal{F}_{n}\left(
a,b\right) =\mathcal{F}_{n}\left( a,b\right) \mathcal{F}_{m}\left(
a,b\right)=\left( \frac{b}{a}\right) ^{\varepsilon \left( mn\right) }
\mathcal{F}_{m+n}\left( a,b\right)$,

\item[$\boldsymbol{(ii)}$] \ \ $\mathcal{F}_{m}\left( a,b\right) \mathcal{L}_{n}\left(
a,b\right) =\mathcal{L}_{n}\left( a,b\right) \mathcal{F}_{m}\left(
a,b\right) =\left( \frac{b}{a}\right) ^{\varepsilon \left( m\right)
\varepsilon \left( n+1\right) }\mathcal{L}_{m+n}\left( a,b\right)$,

\item[$\boldsymbol{(iii)}$] $\mathcal{L}_{m}\left( a,b\right) \mathcal{L}_{n}\left(
a,b\right) =\mathcal{L}_{n}\left( a,b\right) \mathcal{L}_{m}\left(
a,b\right)  =\left( \frac{a%
}{b}\right) ^{2-\left[ \varepsilon \left( m+1\right) \varepsilon \left(
n+1\right) \right] }\left( ab+4\right) \mathcal{F}_{m+n}\left( a,b\right)$.
\end{itemize}
\end{theorem}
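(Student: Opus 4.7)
The plan is to prove part $(i)$ first by direct matrix multiplication, and then to derive parts $(ii)$ and $(iii)$ from $(i)$ combined with Theorem \ref{oz2} and a scalar identity for $\mathcal{L}_{0}(a,b)^{2}$.

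For $(i)$, I would expand $\mathcal{F}_{m}(a,b)\mathcal{F}_{n}(a,b)$ entry by entry using the explicit form (\ref{1.3}). Each of the four entries becomes a short sum of products $q_{i}q_{j}$, carrying powers of $b/a$ from the $\varepsilon$-factors on the diagonal. The task is then to identify these sums with the corresponding entries of $\mathcal{F}_{m+n}(a,b)$ via the bi-periodic Fibonacci addition formula, the natural analogue of $F_{m+n+1}=F_{m+1}F_{n+1}+F_{m}F_{n}$ with a parity-dependent coefficient in the cross term. A case split by the parities of $m$ and $n$ collapses the overall scalar into precisely $(b/a)^{\varepsilon(mn)}$. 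Commutativity $\mathcal{F}_{m}\mathcal{F}_{n}=\mathcal{F}_{n}\mathcal{F}_{m}$ then follows for free, since the right-hand side is symmetric in $m$ and $n$.

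For $(ii)$ and $(iii)$, the key preliminary computation is the scalar identity
\[
\mathcal{L}_{0}(a,b)^{2}=\tfrac{a}{b}(ab+4)\,I,
\]
obtained by a direct $2\times 2$ multiplication. Combined with Theorem \ref{oz2}$(i)$, which gives $\mathcal{L}_{k}(a,b)=(a/b)^{\varepsilon(k)}\mathcal{L}_{0}(a,b)\mathcal{F}_{k}(a,b)$, and \ref{oz2}$(ii)$, which says $\mathcal{L}_{0}$ commutes with every $\mathcal{F}_{k}$, I can rewrite $\mathcal{F}_{m}\mathcal{L}_{n}$ as $(a/b)^{\varepsilon(n)}\mathcal{F}_{m}\mathcal{L}_{0}\mathcal{F}_{n}$ and reduce using $\mathcal{F}_{m}\mathcal{L}_{0}=(b/a)^{\varepsilon(m)}\mathcal{L}_{m}$, part $(i)$, and $\mathcal{L}_{0}\mathcal{F}_{m+n}=(b/a)^{\varepsilon(m+n)}\mathcal{L}_{m+n}$. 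For $(iii)$, a parallel calculation gives
\[
\mathcal{L}_{m}\mathcal{L}_{n}=(a/b)^{\varepsilon(m)+\varepsilon(n)+1}(ab+4)(b/a)^{\varepsilon(mn)}\mathcal{F}_{m+n}(a,b).
\]
The commutativity assertions in $(ii)$ and $(iii)$ follow from the pairwise commutativity of $\mathcal{F}_{m}$, $\mathcal{F}_{n}$, and $\mathcal{L}_{0}$.

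The main obstacle is the parity bookkeeping. The raw exponent produced for $(ii)$ is $-\varepsilon(n)+\varepsilon(mn)+\varepsilon(m+n)$, while that produced for $(iii)$ is $\varepsilon(m)+\varepsilon(n)+1-\varepsilon(mn)$; these must be reconciled with the compact forms $\varepsilon(m)\varepsilon(n+1)$ and $2-\varepsilon(m+1)\varepsilon(n+1)$ stated in the theorem. Both identities are routine but not entirely transparent, and are best verified by tabulating the four cases of $(m \bmod 2,\, n \bmod 2)$. The other non-trivial ingredient is the bi-periodic Fibonacci addition formula underlying $(i)$, but this is a standard identity that can be established by induction on $n$ using the recurrence (\ref{1.0}).
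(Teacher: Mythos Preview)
Your proposal is correct, and parts $(ii)$ and $(iii)$ in particular take a somewhat different route from the paper, while $(ii)$ is essentially identical to the paper's argument.

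For $(i)$, the paper does \emph{not} multiply the explicit matrices (\ref{1.3}) entry by entry; instead it rewrites the Binet formula (\ref{fibobinet}) in even/odd form, introduces four auxiliary matrices $K,L,M,N$ built from $\mathcal{F}_{0}$, $\mathcal{F}_{1}$, $\alpha$, $\beta$, and verifies the pairwise products $K^{2},L^{2},KL,\dots$ to conclude. Your direct-multiplication approach works too, but it imports the bi-periodic addition formula for $q_{n}$ as a separate lemma; the paper's Binet computation avoids that dependence at the cost of a slightly heavier algebraic setup. For $(iii)$, the paper never computes $\mathcal{L}_{0}^{2}$; it instead substitutes $\mathcal{L}_{k}=\tfrac{a}{b}(\mathcal{F}_{k-1}+\mathcal{F}_{k+1})$ from Theorem~\ref{oz1}$(ii)$, expands the resulting fourfold sum via part $(i)$, and then collapses $\mathcal{F}_{m+n+2}+2\mathcal{F}_{m+n}+\mathcal{F}_{m+n-2}$ back through Theorem~\ref{oz1} to obtain $(ab+4)\mathcal{F}_{m+n}$. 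Your observation that $\mathcal{L}_{0}(a,b)^{2}=\tfrac{a}{b}(ab+4)I$ is a genuine shortcut: combined with Theorem~\ref{oz2}$(ii)$ it reduces $(iii)$ to a two-line calculation and makes the factor $(ab+4)$ appear transparently, whereas the paper recovers it only after two further appeals to Theorem~\ref{oz1}. The parity bookkeeping you flag (e.g.\ $-\varepsilon(n)+\varepsilon(mn)+\varepsilon(m+n)=\varepsilon(m)\varepsilon(n+1)$) is exactly the same issue the paper handles, and a four-case table is indeed the cleanest verification.
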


\begin{proof}
\begin{itemize}
\item[$\boldsymbol{(i)}$] Here, we will just show the truthness of the equality $\mathcal{F}_{m}\left( a,b\right) \mathcal{F}_{n}\left(
a,b\right) =\left( \frac{b}{a}\right) ^{\varepsilon \left( mn\right) }%
\mathcal{F}_{m+n}\left( a,b\right) $ since the other can be done similarly. From the equation (\ref{fibobinet}), we can write
\begin{equation*}
\mathcal{F}_{n}\left( a,b\right) =\left\{ 
\begin{array}{c}
\frac{1}{\alpha -\beta }\left\{ \frac{a{\mathcal{F}}_{1}\left( a,b\right)
+\alpha \mathcal{F}_{0}\left( a,b\right) -ab\mathcal{F}_{0}\left( a,b\right) 
}{\left( ab\right) ^{\frac{n}{2}}}\alpha ^{n}-\frac{a{\mathcal{F}}_{1}\left(
a,b\right) +\beta \mathcal{F}_{0}\left( a,b\right) -ab\mathcal{F}_{0}\left(
a,b\right) }{\left( ab\right) ^{\frac{n}{2}}}\beta ^{n}\right\},\text{ \ }n
\text{\ even} \\ 
\frac{1}{\alpha -\beta }\left\{ \frac{\alpha \mathcal{F}_{1}\left(
a,b\right) +b\mathcal{F}_{0}\left( a,b\right) }{\left( ab\right) ^{\frac{n-1%
}{2}}}\alpha ^{n-1}-\frac{\beta \mathcal{F}_{1}\left( a,b\right) +b\mathcal{F%
}_{0}\left( a,b\right) }{\left( ab\right) ^{\frac{n-1}{2}}}\beta
^{n-1}\right\},\text{ \ }n\text{\ odd }
\end{array}%
\right. .
\end{equation*}

Let $K=a{\mathcal{F}}_{1}\left( a,b\right) +\alpha \mathcal{F}%
_{0}\left( a,b\right) -ab\mathcal{F}_{0}\left( a,b\right),\ L=a{\mathcal{F}}%
_{1}\left( a,b\right) +\beta \mathcal{F}_{0}\left( a,b\right) -ab\mathcal{F}%
_{0}\left( a,b\right) ,\ M=\alpha \mathcal{F}_{1}\left( a,b\right) +b\mathcal{F%
}_{0}\left( a,b\right) $ and $N=\beta \mathcal{F}_{1}\left( a,b\right) +b%
\mathcal{F}_{0}\left( a,b\right)$. Then, we have $K^{2}=\left( \alpha -\beta
\right)K,\ L^{2}=-\left( \alpha -\beta \right)L,\ M^{2}=\frac{\alpha ^{2}%
}{a^{2}}\left( \alpha -\beta \right) K,\\ N^{2}=-\frac{\beta ^{2}}{a^{2}}\left(
\alpha -\beta \right) L,\ KM=\left( \alpha -\beta \right)M,\ LN=-\left( \alpha -\beta \right) N,\ KL=KN=MN=LM=0$. Consequently, we get
\begin{equation*}
\mathcal{F}_{m}\left( a,b\right) \mathcal{F}_{n}\left( a,b\right) =\left\{ 
\begin{array}{c}
\mathcal{F}_{m+n},\ \ \ \ \ \ \ \ \  \ \ \ \ \ \  \ m,n \ \ \text{even} \\ 
\frac{b}{a}\mathcal{F}_{m+n}, \ \ \ \ \ \ \ \ \  \ \ \ \ \ m,n \ \ \text{odd} \\ 
\mathcal{F}_{m+n},\ m\ \text{even(odd)}, n\ \text{odd(even)}
\end{array}%
\right. 
\end{equation*}
which is desired.

\item[$\boldsymbol{(ii)}$] Here, we will just show the truthness of the equality $\mathcal{%
F}_{m}\left( a,b\right) \mathcal{L}_{n}\left( a,b\right) =\left( \frac{b}{a}%
\right) ^{\varepsilon \left( m\right) \varepsilon \left( n+1\right) }%
\mathcal{L}_{m+n}\left( a,b\right) $ since the other can be done similarly.
Now, by the condition $(ii)$ of Theorem \ref{oz2} and $(i)$, we write
\begin{eqnarray*}
\mathcal{F}_{m}\left( a,b\right) \mathcal{L}_{n}\left( a,b\right)& =&\left( 
\frac{a}{b}\right) ^{\varepsilon (n)}\mathcal{F}_{m}\left( a,b\right) 
\mathcal{F}_{n}\left( a,b\right) \mathcal{L}_{0}\left( a,b\right) \\
&=&\left( \frac{a}{b}\right) ^{\varepsilon (n)}\left( \frac{b}{a}\right)
^{\varepsilon \left( mn\right) }\mathcal{F}_{m+n}\left( a,b\right) \mathcal{L%
}_{0}\left( a,b\right)\\
& =&\left( \frac{a}{b}\right) ^{\varepsilon
(n)-\varepsilon \left( mn\right) -\varepsilon \left( m+n\right) }\mathcal{L}%
_{m+n}\left( a,b\right) \\
&=&\left( \frac{b}{a}\right) ^{\varepsilon \left( m\right) \varepsilon
\left( n+1\right) }\mathcal{L}_{m+n}\left( a,b\right)\,.
\end{eqnarray*}

\item[$\boldsymbol{(iii)}$] We will just show the $\mathcal{L}_{m}\left( a,b\right) \mathcal{L}_{n}\left(
a,b\right) =\left( \frac{a%
}{b}\right) ^{2-\left[ \varepsilon \left( m+1\right) \varepsilon \left(
n+1\right) \right] }\left( ab+4\right) \mathcal{F}_{m+n}\left( a,b\right) $. So, from Theorem \ref{oz1} and $(i)$, we have
\begin{eqnarray*}
\mathcal{L}_{m}\left( a,b\right) \mathcal{L}_{n}\left( a,b\right) &=&\frac{a%
}{b}\left( \mathcal{F}_{m+1}\left( a,b\right) +\mathcal{F}_{m-1}\left(
a,b\right) \right) \frac{a}{b}\left( \mathcal{F}_{n+1}\left( a,b\right) +%
\mathcal{F}_{n-1}\left( a,b\right) \right) \\
&=&\frac{a^{2}}{b^{2}}\left\{ 
\begin{array}{c}
\left( \frac{b}{a}\right) ^{\varepsilon \left[ \left( m+1\right) \left(
n+1\right) \right] }\mathcal{F}_{m+n+2}\left( a,b\right)\\+\left( \frac{b}{a}%
\right) ^{\varepsilon \left[ \left( m+1\right) \left( n-1\right) \right] }%
\mathcal{F}_{m+n}\left( a,b\right)\\+\left( \frac{b}{a}\right) ^{\varepsilon \left[ \left( m-1\right) \left(n+1\right) \right] }\mathcal{F}_{m+n}\left( a,b\right)\\+\left( \frac{b}{a}\right) ^{\varepsilon \left[ \left( m-1\right) \left( n-1\right) \right] }%
\mathcal{F}_{m+n-2}\left( a,b\right)
\end{array}
\right\} 
\end{eqnarray*}
\begin{eqnarray*}
\mathcal{L}_{m}\left( a,b\right) \mathcal{L}_{n}\left( a,b\right)&=&\left( \frac{a}{b}\right) ^{2-\left[ \varepsilon \left( m+1\right)
\varepsilon \left( n+1\right) \right] }\left[ \mathcal{F}_{m+n+2}\left(
a,b\right) +2\mathcal{F}_{m+n}\left( a,b\right) +\mathcal{F}_{m+n-2}\left(
a,b\right) \right] \\
&=&\left( \frac{a}{b}\right) ^{2-\left[ \varepsilon \left( m+1\right)
\varepsilon \left( n+1\right) \right] }\left[ \left( \frac{b}{a}\right) 
\mathcal{L}_{m+n+1}\left( a,b\right) +\left( \frac{b}{a}\right) \mathcal{L}%
_{m+n-1}\left( a,b\right) \right] \\
&=&\left( \frac{a%
}{b}\right) ^{2-\left[ \varepsilon \left( m+1\right) \varepsilon \left(
n+1\right) \right] }\left( ab+4\right) \mathcal{F}_{m+n}\left( a,b\right)\,.
\end{eqnarray*}
\end{itemize}
\end{proof}

\begin{theorem}
\label{oz4} For $m,n,r$ $\in\mathbb{N}$ and $n\geq r$, the following equalities are hold:
\begin{itemize}
\item[$\boldsymbol{(i)}$] \ \ $\mathcal{F}_{n}^{m}\left( a,b\right) =\left( \frac{b}{a}%
\right) ^{\left\lfloor \frac{m}{2}\right\rfloor \varepsilon \left( n\right) }%
\mathcal{F}_{mn}\left( a,b\right)$\,,

\item[$\boldsymbol{(ii)}$] \ \ $\mathcal{F}_{n+1}^{m}\left( a,b\right) =\left( \frac{a}{b}%
\right) ^{\left\lfloor \frac{m+1}{2}\right\rfloor \varepsilon \left(
n\right) }\mathcal{F}_{1}^{m}\left( a,b\right) \mathcal{F}_{mn}\left(
a,b\right)$\,,

\item[$\boldsymbol{(iii)}$] \ $\ \mathcal{F}_{n-r}\left( a,b\right) \mathcal{F}%
_{n+r}\left( a,b\right) =\left( \frac{b}{a}\right) ^{\varepsilon \left(
n-r\right) }\mathcal{F}_{2}^{n}\left( a,b\right) =\left( \frac{b}{a}\right)
^{\left( -1\right) ^{n}\varepsilon \left( r\right) }\mathcal{F}%
_{n}^{2}\left( a,b\right)$\,,

\item[$\boldsymbol{(iv)}$] \ \ $\mathcal{L}_{n-r}\left( a,b\right) \mathcal{L}_{n+r}\left(
a,b\right) =\left( \frac{a}{b}\right) ^{\left( -1\right) ^{n}\varepsilon
\left( r\right) }\mathcal{L}_{n}^{2}\left( a,b\right)$\,,

\item[$\boldsymbol{(v)}$] \ \ $\mathcal{L}_{0}^{m}\left( a,b\right) \mathcal{F}%
_{mn}\left( a,b\right) =\left( \frac{b}{a}\right) ^{\left\lfloor \frac{m+1}{2%
}\right\rfloor \varepsilon \left( n\right) }\mathcal{L}_{n}^{m}\left(
a,b\right)$\,.
\end{itemize}
\end{theorem}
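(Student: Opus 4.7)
The plan is to derive all five identities from the product and commutation rules of Theorem~\ref{oz3} together with the mixed relation $\mathcal{L}_{0}(a,b)\mathcal{F}_{n}(a,b)=(b/a)^{\varepsilon(n)}\mathcal{L}_{n}(a,b)$ from Theorem~\ref{oz2}. The identities are essentially recastings of these product rules, so the main technical burden will be a case-by-case bookkeeping of the exponent of $b/a$ (or $a/b$) according to the parities of $m$, $n$, and $r$.

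For $(i)$ I would proceed by induction on $m$: the base case $m=1$ is immediate since $\lfloor 1/2\rfloor=0$, and in the inductive step I multiply $\mathcal{F}_{n}^{m}$ by $\mathcal{F}_{n}$ using Theorem~\ref{oz3}$(i)$, obtaining the exponent $\lfloor m/2\rfloor\varepsilon(n)+\varepsilon(mn^{2})$ on $b/a$; a short check over the parities of $m$ and $n$ collapses this to $\lfloor (m+1)/2\rfloor\varepsilon(n)$. Statement $(ii)$ then follows by writing $\mathcal{F}_{n+1}=(a/b)^{\varepsilon(n)}\mathcal{F}_{1}\mathcal{F}_{n}$ (another consequence of Theorem~\ref{oz3}$(i)$), raising to the $m$-th power (legitimate because $\mathcal{F}_{1}$ and $\mathcal{F}_{n}$ commute), substituting $(i)$ for $\mathcal{F}_{n}^{m}$, and simplifying with the elementary identity $m-\lfloor m/2\rfloor=\lfloor (m+1)/2\rfloor$. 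Statement $(v)$ uses exactly the same manoeuvre with $\mathcal{L}_{0}$ playing the role of $\mathcal{F}_{1}$: inverting Theorem~\ref{oz2}$(i)$ gives $\mathcal{L}_{n}=(a/b)^{\varepsilon(n)}\mathcal{L}_{0}\mathcal{F}_{n}$, the commutativity in Theorem~\ref{oz2}$(ii)$ legitimizes raising to the $m$-th power, and $(i)$ finishes the computation.

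Finally, for $(iii)$ and $(iv)$ I would apply Theorem~\ref{oz3} directly to both $\mathcal{F}_{n-r}\mathcal{F}_{n+r}$ and $\mathcal{F}_{n}^{2}$ (respectively $\mathcal{L}_{n-r}\mathcal{L}_{n+r}$ and $\mathcal{L}_{n}^{2}$); every such expression is a scalar multiple of $\mathcal{F}_{2n}$, so forming the ratios reduces each identity to a single exponent equality. I expect this parity bookkeeping to be the main obstacle: one must verify the reductions $\varepsilon((n-r)(n+r))=\varepsilon(n-r)$ and $\varepsilon(n+1)^{2}=\varepsilon(n+1)$, and then enumerate the four parity classes of $(n,r)$ to confirm that the residual exponent of $b/a$ in $(iii)$ equals both $\varepsilon(n-r)$ and $(-1)^{n}\varepsilon(r)$, while the residual exponent of $a/b$ in $(iv)$ equals $(-1)^{n}\varepsilon(r)$. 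Each individual check is elementary, but a tidy case table is what will make the final presentation clean, and for $(iii)$ one also invokes $(i)$ with $n\mapsto 2$ to recognise $\mathcal{F}_{2}^{n}=\mathcal{F}_{2n}$.
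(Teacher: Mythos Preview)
Your proposal is correct and follows essentially the same strategy as the paper: both derive everything from the product rule of Theorem~\ref{oz3}$(i)$ and the relation $\mathcal{L}_{0}\mathcal{F}_{n}=(b/a)^{\varepsilon(n)}\mathcal{L}_{n}$, with the work reduced to parity bookkeeping on the exponent of $b/a$. Your treatment of $(ii)$ (factor $\mathcal{F}_{n+1}=(a/b)^{\varepsilon(n)}\mathcal{F}_{1}\mathcal{F}_{n}$ first, then raise to the $m$-th power) is a slightly cleaner variant of the paper's route (which first applies $(i)$ to $\mathcal{F}_{n+1}^{m}$ and then iteratively decomposes $\mathcal{F}_{m}$ into $\mathcal{F}_{1}^{m}$), and you supply the arguments for $(iv)$ and $(v)$ that the paper leaves to the reader.
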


\begin{proof}
\begin{itemize}
\item[$\boldsymbol{(i)}$] We actually can write $\mathcal{F}_{n}^{m}\left( a,b\right) =
\mathcal{F}_{n}\left( a,b\right) \mathcal{F}_{n}\left( a,b\right) \cdots 
\mathcal{F}_{n}\left( a,b\right) $ ($m$-times). Now, by the condition $(i)$ of Theorem \ref{oz3}, we clearly obtain
\begin{eqnarray*}
\mathcal{F}_{n}^{m}\left( a,b\right) &=&\left\{ 
\begin{array}{c}
\left( \left( \frac{b}{a}\right) ^{\varepsilon (n)}\right) ^{\frac{m}{2}}
\mathcal{F}_{mn}\left( a,b\right) ,\ \ \ \ \ \  \ \ \ \ \ \ \text{ }m\ \text{even}\ \ \ \ \ \\ 
\left( \left( \frac{b}{a}\right) ^{\varepsilon (n)}\right) ^{\frac{m-1}{2}}
\mathcal{F}_{n\left( m-1\right) }\left( a,b\right) \mathcal{F}_{n}\left(
a,b\right) ,\text{ }m \ \text{odd}
\end{array}
\right. \\
&=&\left\{ 
\begin{array}{c}
\left( \left( \frac{b}{a}\right) ^{\varepsilon (n)}\right) ^{\frac{m}{2}}%
\mathcal{F}_{mn}\left( a,b\right) ,\ \ \text{ }m \ \text{even} \\ 
\left( \left( \frac{b}{a}\right) ^{\varepsilon (n)}\right) ^{\frac{m-1}{2}}%
\mathcal{F}_{mn}\left( a,b\right) ,\ \text{ }m \ \text{odd}%
\end{array}%
\right. \\
&=&\left( \frac{b}{a}\right) ^{\left\lfloor \frac{m}{2}\right\rfloor
\varepsilon \left( n\right) }\mathcal{F}_{mn}\left( a,b\right)\,.
\end{eqnarray*}

\item[$\boldsymbol{(ii)}$] Let us consider the left-hand side of the equality. As a
similar approximation in $(i)$, we write

\begin{eqnarray*}
\mathcal{F}_{n+1}^{m}\left( a,b\right) &=&\left\{ 
\begin{array}{c}
\left( \left( \frac{b}{a}\right) ^{\varepsilon (n+1)}\right) ^{\frac{m}{2}}%
\mathcal{F}_{m\left( n+1\right) }\left( a,b\right) ,\text{ }m\text{\ even } \\ 
\left( \left( \frac{b}{a}\right) ^{\varepsilon (n+1)}\right) ^{\frac{m-1}{2}}%
\mathcal{F}_{\left( n+1\right) \left( m-1\right) }\left( a,b\right) \mathcal{%
F}_{n+1}\left( a,b\right) ,\text{ }m\text{\ odd }%
\end{array}%
\right. 
\end{eqnarray*}
\begin{eqnarray*}
\mathcal{F}_{n+1}^{m}\left( a,b\right) &=&\left\{ 
\begin{array}{c}
\left( \left( \frac{b}{a}\right) ^{\varepsilon (n+1)}\right) ^{\frac{m}{2}}%
\mathcal{F}_{m\left( n+1\right) }\left( a,b\right) ,\text{ }m\text{\ even } \\ 
\left( \left( \frac{b}{a}\right) ^{\varepsilon (n+1)}\right) ^{\frac{m-1}{2}}%
\mathcal{F}_{m\left( n+1\right) }\left( a,b\right) ,\text{ }m\text{\ odd }%
\end{array}%
\right. \\
&=&\left( \frac{b}{a}\right) ^{\left\lfloor \frac{m}{2}\right\rfloor
\varepsilon \left( n+1\right) }\mathcal{F}_{mn+m}\left( a,b\right)\\
&=&\left( 
\frac{b}{a}\right) ^{\left\lfloor \frac{m}{2}\right\rfloor \varepsilon
\left( n+1\right) }\left( \frac{a}{b}\right) ^{\varepsilon (mn)}\mathcal{F}%
_{m}\left( a,b\right) \mathcal{F}_{mn}\left( a,b\right) \\
&=&\left( \frac{b}{a}\right) ^{\left\lfloor \frac{m}{2}\right\rfloor
\varepsilon \left( n+1\right) -\varepsilon (mn)}\mathcal{F}_{mn}\left(
a,b\right) \mathcal{F}_{m}\left( a,b\right)\,.
\end{eqnarray*}
Similarly, we can write $\mathcal{F}_{m}\left( a,b\right) =\left( \frac{a}{b}%
\right) ^{\varepsilon (m-1)}\mathcal{F}_{m-1}\left( a,b\right) \mathcal{F}%
_{1}\left( a,b\right) $. By iterative processes, we obtain $\mathcal{F}%
_{m}\left( a,b\right) =\left( \frac{a}{b}\right) ^{\left\lfloor \frac{m}{2}%
\right\rfloor }\mathcal{F}_{1}^{m}\left( a,b\right)$. Thus,
\begin{eqnarray*}
\mathcal{F}_{n+1}^{m}\left( a,b\right) &=&\left( \frac{b}{a}\right)
^{\left\lfloor \frac{m}{2}\right\rfloor \varepsilon \left( n+1\right)
-\varepsilon (mn)-\left\lfloor \frac{m}{2}\right\rfloor }\mathcal{F}%
_{mn}\left( a,b\right) \mathcal{F}_{1}^{m}\left( a,b\right) \\
&=&\left( \frac{a}{b}\right) ^{\left\lfloor \frac{m+1}{2}\right\rfloor
\varepsilon \left( n\right) }\mathcal{F}_{1}^{m}\left( a,b\right) \mathcal{F}%
_{mn}\left( a,b\right)\,.
\end{eqnarray*}

\item[$\boldsymbol{(iii)}$] From Theorem \ref{oz3} and $(i)$, we write
\begin{eqnarray*}
\mathcal{F}_{n-r}\left( a,b\right) \mathcal{F}_{n+r}\left( a,b\right)
&=&\left( \frac{b}{a}\right) ^{\varepsilon \left[ \left( n-r\right) \left(
n+r\right) \right] }\mathcal{F}_{2n}\left( a,b\right) \\
&=&\left( \frac{b}{a}\right) ^{\varepsilon \left[ \left( n-r\right) \left(
n+r\right) \right] }\mathcal{F}_{2}^{n}\left( a,b\right) \\
&=&\left( \frac{b}{a}\right) ^{\varepsilon \left( n-r\right) }\mathcal{F}%
_{2}^{n}\left( a,b\right)\,.
\end{eqnarray*}
Also, we give
\begin{eqnarray*}
\mathcal{F}_{n-r}\left( a,b\right) \mathcal{F}_{n+r}\left( a,b\right)
&=&\left( \frac{b}{a}\right) ^{\varepsilon \left[ \left( n-r\right) \left(
n+r\right) \right] }\mathcal{F}_{2n}\left( a,b\right) \\
&=&\left( \frac{b}{a}%
\right) ^{\varepsilon \left[ \left( n-r\right) \left( n+r\right) \right]
}\left( \frac{a}{b}\right) ^{\varepsilon \left( n\right) }\mathcal{F}%
_{n}^{2}\left( a,b\right) \\
&=&\left( \frac{b}{a}\right) ^{\left( -1\right)
^{n}\varepsilon \left( r\right) }\mathcal{F}_{n}^{2}\left( a,b\right)\,.
\end{eqnarray*}
\end{itemize}
\end{proof}

\section*{Conclusion}
\qquad
In this paper, we define the bi-periodic Lucas matrix sequence and give some
properties of this new sequence. Thus, it is obtained a new genaralization
for the matrix sequences and number sequences that have the similar
recurrence relation in the literature. By taking into account this
generalized matrix sequence and its properties, it also can be obtained
properties of the bi-periodic Lucas numbers. That is, if we compare the $1$%
\textit{st} row and $2$\textit{nd} column entries of obtained equalities for
matrix sequence in Section 2, we can get some properties for bi-periodic
Lucas numbers. Also, comparing the row and column entries of obtained
expressions for matrix sequences in Section 3, we can obtain relationships
between the bi-periodic Fibonacci and bi-periodic Lucas numbers. Finally,
some well-known matrix sequences as Lucas, $k$-Lucas and Pell-Lucas are special
cases of \{$\mathcal{L}_{n}\left( a,b\right) $\} matrix sequence. That is,
if we choose the different values of $a$ and $b$, then we obtain the
summations, generating functions, Binet formulas and relationships of the well-known matrix sequences in the literature:

\begin{itemize}
\item If we replace $a=b=1$ in $\mathcal{L}_{n}\left( a,b\right) $, we
obtain for Lucas matrix sequence.

\item If we replace $a=b=k$ in $\mathcal{L}_{n}\left( a,b\right) $, we
obtain for $k$-Lucas matrix sequence.
\end{itemize}

\end{document}